\def\sideremark#1{\ifvmode\leavevmode\fi\vadjust{\vbox to0pt{\vss
 \hbox to 0pt{\hskip\hsize\hskip1em
 \vbox{\hsize3cm\tiny\raggedright\pretolerance10000
 \noindent #1\hfill}\hss}\vbox to8pt{\vfil}\vss}}}%
\newcommand{\so}{\mathfrak{so}}
\newcommand{\sll}{\mathfrak{sl}}
\newcommand{\Con}{\mathcal C}
\newcommand{\C}{\mathbb C}
\newcommand{\CP}{\C\mathrm{P}}
\newcommand{\R}{\mathbb R}
\newcommand{\tG}{\tilde G}
\newcommand{\tP}{\tilde P}
\newcommand{\Bih}{\operatorname{Bih}}
\newcommand{\al}{\alpha}
\newcommand{\G}{\mathcal G}
\newcommand{\om}{\omega}
\renewcommand{\th}{\theta}
\newcommand{\Om}{\Omega}
\newcommand{\pp}{\mathfrak{p}}
\newcommand{\tom}{\tilde\om}
\newcommand{\A}{\mathcal T}
\newcommand{\g}{\mathfrak{g}}
\newcommand{\h}{\mathfrak{h}}
\newcommand{\Gr}{\operatorname{Gr}}
\newcommand{\Mat}{\operatorname{Mat}}
\newcommand{\Hom}{\operatorname{Hom}}
\newcommand{\tr}{\operatorname{tr}}
\newcommand{\mmod}{\operatorname{mod}}
\newcommand{\pr}{\operatorname{pr}}
\newcommand{\tE}{\tilde E}
\newcommand{\dd}[1]{\frac{\partial}{\partial #1}}
\newtheorem{thm}{Theorem}
\newtheorem{lem}{Lemma}
\theoremstyle{definition}
\newtheorem{ex}{Example}
\theoremstyle{remark}
\newtheorem{rem}{Remark}
\begin{document}
\title{Inclusions between parabolic geometries}
\author{Boris Doubrov \and Jan Slov\'ak}

\address{Department of Applied Mathematics, Belorussian State University,
Skoriny av. 4, 220030~Minsk, Belarus}
\email{doubrov@islc.org}
\address{Department of Mathematics and Statistics, Masaryk University, Kotl\'a\v rsk\'a
2a, 611~37~Brno, Czech Republic}

\email{slovak@math.muni.cz}

\begin{abstract}
Some of the well known Fefferman like constructions of
parabolic geometries end up with a new structure on the same manifold. In
this paper, we classify all such cases with the help of the classical
Onishchik's lists \cite{onish1} and we treat the only new series of
inclusions in detail, providing the spinorial structures on the manifolds with generic
free distributions. Our technique relies on the cohomological understanding
of the canonical normal Cartan connections for parabolic geometries and the
classical computations with exterior forms. Apart of the complete
discussion of the distributions from the geometrical point of view and the
new functorial construction of the inclusion into the spinorial geometry, we
also discuss the normality problem of the resulting spinorial connections.
In particular, there is a non--trivial subclass of distributions
providing normal spinorial connections directly by the
construction.
\end{abstract}

\thanks{\textit{Dedicated to Professor Joseph J.
Kohn on the occasion of his 75-th birthday}.\\
Research supported by the Eduard \v Cech Center for Algebra and
Geometry and the grant GACR 201/08/0397.}

\maketitle

\section{Introduction}

This paper is motivated by two recent examples of conformal structures
naturally associated with non-degenerate rank 2 vector distributions
on 5-dimensional manifolds~\cite{nurowski} and with non-degenerate
rank 3 distributions on 6-dimensional manifolds~\cite{bryant}. In both
cases there are natural parabolic geometries associated with these
distributions, which serve as an intermediate structure between the
distribution and the conformal geometry.

\begin{ex}[Nurowski~\cite{nurowski}]
Let $D$ be a rank 2 non-degenerate vector distribution on
5-dimensional manifold $M$. The non-degeneracy condition means that
the derived spaces $D^2 = D+[D,D]$ and $D^3=D^2+[D,D^2]$ have the
maximal possible dimensions $3$ and $5$ respectively. Elie Cartan
proved in his famous paper~\cite{cartan}, that there is a natural
$G_2$--geometry associated with any such distribution. Pavel Nurowski
noticed that this geometry can be extended into the conformal geometry
of signature $(3,2)$ using the classical embedding of the split real
form of $G_2$ into $\so(4,3)$. Thus, there exists a natural cone of
null-vectors $\Con\subset TM$ associated with any such distribution.
\end{ex}

\begin{ex}[Bryant~\cite{bryant}]
Similarly, let $D$ be a rank 3 vector distribution on a 6-dimensional
manifold $M$. We assume that $D$ is non-degenerate, that is
$D^2=D+[D,D]$ coincides with all $TM$. Robert Bryant~\cite{bryant} 
showed that there is a natural $SO(4,3)$--geometry associated with each
such distribution and then used the spinor representation
$\so(4,3)\to\so(4,4)$ in order to extend it to the conformal geometry of type
$(3,3)$ on the manifold $M$. In particular, there is also a natural
cone of null-vectors $\Con\subset TM$ associated with any such
distribution.
\end{ex}

Both examples have striking similarity. Firstly, they both start with
non-degenerate vector distributions, which generate parabolic geometries. All
such distributions were described by K.~Yamaguchi~\cite{yamaguchi} using the
theory of geometric structures associated with filtered
manifolds~\cite{tanaka}. Secondly, the associated parabolic geometries can be
embedded into another parabolic geometry, the conformal one.

This raises the natural question: \emph{What are all possible embeddings of
one parabolic geometry into another, and what geometric meanings do they
have?} We answer these questions, using the classical Onishchik's
list~\cite{onish1} of all possible inclusions between complex parabolic
homogeneous spaces. It appears that there are only three such examples (two
of them are series of geometries in appropriate dimensions). One of the
series provides the trivial inclusion of the contact projective geometry
(cf. D. Fox, \cite{fox}) into the projective geometry. The second example is
the embedding of $G_2$ geometry into the conformal $(3,2)$ geometry used by
P.~Nurowski. Finally, the third example produces the embedding of
$B_l$-geometry into $D_{l+1}$-geometry, which was used in the smallest
dimension $l=3$ by R.~Bryant.

We explore the latter series for an arbitrary $l$ in more detail. As a result, we
show that any non-degenerate $l$-dimensional vector distribution on $l(l+1)/2$
dimensional manifold $M$ induces a natural almost spinorial structure on $M$
associated
with it. An \emph{almost spinorial structure} on a manifold $M$ is given by an
isomorphism of the tangent bundle $TM$ with the vector bundle $\Lambda^2 S$ for
some vector bundle $S$ over $M$. This structure can be considered as a way of
identifying each tangent space $T_pM$, $p\in M$, with the set of skew-symmetric
matrices with $(l+1)$ rows and columns. In particular, for odd $l$ each such structure
defines a cone $\Con$ in $TM$ consisting of all degenerate skew-symmetric
matrices, which are given as zeros of the Pfaffian. For example, in the 
case of $l=3$ which was considered by R.~Bryant, 
we get the quadratic non-degenerate
cone. So, in this smallest possible dimension the almost spinorial structure
coincides with the conformal structure of signature $(3,3)$.

After providing a very brief review of the main concepts of parabolic
geometries, the inclusions are studied and classified for the homogeneous
models. This leads to the complete classification in Theorem \ref{cor1} and a
full understanding of the simple construction of the spinorial geometry for
the distributions. At the same time this
raises many natural questions about the relations between the two geometries.
Quite straightforward computations exploiting the understanding of the normal
Cartan connections lead to detailed description of the fundamental
invariants of the geometries in Theorems \ref{thm2} and \ref{thm3}. Finally
the study of the main geometric objects 
is continued and the normality questions are discussed in the rest of the
article, cf. Theorem \ref{t:kappa11} and Example \ref{ex-a}.

\section{General parabolic geometries}
\label{sec:2a}

The parabolic geometries
are curved deformations of the homogeneous spaces
$G/P$ with $G$ semisimple and $P$ parabolic. Thus, a
{\em parabolic geometry of
type $(G,P)$ on a manifold $M$} is a principal fiber bundle $\mathcal G\to M$
with structure group $P$, equipped with an absolute parallelism
$\om\in\Om^1(\mathcal G,\mathfrak g)$ which is Ad--invariant
with respect to the principal $P$--action and reproduces the fundamental
vector fields.

The form $\om$ is called the {\em Cartan connection} of type $(G,P)$ on $M$.
Originally, Cartan built more general absolute parallelisms by means of
his famous equivalence method.
Nowadays, the Cartan connections
appear in many areas of geometric analysis and
there is a rich theory introducing various
types of calculi and general structural results,
see \cite{capslovak} for a detailed treatment.

Most general features of the individual types of the parabolic geometries
are read off the algebraic properties of the so called {\em flat models}
$G\to G/P$, where $\om$ is the Maurer--Cartan form. On the other hand, at
curved manifolds, the parabolic geometry is rather given by some explicit
and simple structure visible at the manifold itself, while $\om$ is uniquely
determined by a construction based on a natural normalization. We have mentioned the
well known examples of projective, conformal and spinorial geometries above.

The crucial algebraic structures are derived from the grading
$\g=\g_{-k}\oplus \dots \oplus \g_k$ of the semisimple Lie algebra $\g$
giving rise to the parabolic subalgebra $\pp = \g_0\oplus\dots\oplus\g_k$.
At the level of the curved geometries, this yields the
$P$--invariant filtration on $T\G$ which projects also to the filtration on
$TM$. Let us also notice that the Cartan--Killing form identifies $\pp_+ =
\g_1\oplus\dots\oplus \g_k$ with
$(\g/\pp)^*$ as $P$--modules and $\g/\pp$ equals
to $\g_-=\g_{-k}\oplus\dots\oplus\g_{-1}$ as $G_0$--module, where $G_0$ is
the reductive part of the parabolic subgroup $P$ (with Lie algebra $\g_0$).

It is also well known, how to understand the structure of the geometries in
cohomological terms. The curvature form $\Om\in \Om^2(\G,\g)$
of the Cartan connection $\om$
is given by the structure equation
$$
\Om = d\om +\frac12[\om,\om]
$$
and the absolute parallelism allows to express the curvature by the
curvature function $\kappa:\G\to \wedge^2 \pp_+\otimes\g$, $\kappa(X,Y) =
K(\om^{-1}(X),\om^{-1}(Y))$. Thus, the curvature function
has values in the cochains of the Lie algebra cohomology of $\g_-$ with
coefficients in $\g$. This cohomology is explicitly computable by the
Kostant's version of the BBW theorem, cf. \cite{kost, silhan, capslovak}, 
and we may
compute it either by means of the standard differential $\partial$ or by its
adjoint co--differential $\partial^*$. The formula in the special case of
the above two--chains is
$$
\partial^*(Z_0\wedge Z_1\otimes X) = -Z_0\otimes [Z_1,X] + Z_1\otimes[Z_0,X]
-[Z_0,Z_1]\otimes X
.$$
Another important property of the parabolic geometries imposes conditions
on the behavior of the filtrations and is called {\em
regularity}. In words, the filtrations have to respect the Lie brackets of
vector fields. In terms of the curvature, this says that no curvature
components of non--positive homogeneities are allowed.

The Tanaka theory, further extended and worked out in last thirty years,
shows that normalizing the regular Cartan connections properly defines
an equivalence of categories of certain filtered manifolds (with
additional simple
geometric structures under some cohomological conditions, like for all
$|1|$--gradings or contact gradings) and categories of Cartan connections, cf.
\cite{tanaka, capsch, capslovak}.
Then the harmonic part of the curvature defines all the rest and, in
particular, the geometry is locally isomorphic to its flat model if and only
if the curvature vanishes. Moreover, the entire curvature tensor is
computable explicitly by a natural 
differential operator from its harmonic part, cf.
\cite{cap-twistor}.

\section{Inclusions between parabolic geometries}
\label{sec:2}

Quite often, there are natural constructions linking together
different parabolic geometries. For example,
the Fefferman's celebrated construction
of a conformal structure on a circle bundle over each hypersurface type
CR--manifold allows to exploit the much simpler invariant theory of the
conformal Riemannian structures in order to understand that of the
CR--geometry.

Of course, each such construction is of functorial character and it is
determined at the algebraic level
already at the homogeneous models. Thus we shall start at the level of Lie
groups and we describe the main ingredients of the Fefferman like
constructions.

Let $G/P$ and $\tG/\tP$ be two (real or complex analytic) parabolic
homogeneous spaces, i.e. $G$ is any semisimple Lie group and
$P\subset G$ a parabolic subgroup, and consider an
homomorphism $i\colon G\to \tG$ which is infinitesimally injective.

Second, we require that the $G$--orbit of $o=e\tilde P\in\tilde G/\tilde
P$ is open. This means that the map $\frak g\to \tilde{\mathfrak
g}/\tilde{\mathfrak
p}$ induced by $i':\frak
g\to\tilde{\frak g}$ is surjective.

Now, the subgroup $Q:=i^{-1}(\tilde P)$
is a closed subgroup of $G$, which is usually not parabolic. The
homomorphism $i$ then induces a smooth map $G/Q\to\tilde G/\tilde P$,
whose image is the $G$--orbit of $o$.

Finally, we need that $P\subset G$ contains $Q$.
Having secured all this,
there is the natural projection $\pi:G/Q\to G/P$. The
homomorphism $i:G\to\tilde G$ induces the smooth map $G/Q\to \tilde
G/\tilde P$ which is a covering of the $G$--orbit of
$o$, and as an open subset in $\tilde G/\tilde P$ carries
a canonical geometry of type $(\tilde G,\tilde P)$. This can be
pulled back to obtain such a geometry on $G/Q$.

Of course, if we replace Lie groups $G$, $\tG$ and their Maurer--Cartan
forms by the principal fiber bundles and Cartan connections, the same
construction applies with the Lie subgroups $P$, $\tP$ and $Q$, see
\cite[Section 4.5]{capslovak} for general theory and several examples.

Especially, it may happen that
$$
i(G)\tP=\tG\ \mbox{and}\
i(P)=i(G)\cap \tP
$$
i.e. $Q=P$ is the parabolic subgroup.
Then both parabolic geometries turn out to live over the same base manifold
$G/P = \tilde G/\tilde P$.
We say that $i$ is an
{\em inclusion of parabolic homogeneous spaces}. For curved
geometries of these types we talk about {\em inclusions of parabolic
geometries}.

This is equivalent to the conditions that $i(G)$
acts transitively on the manifold $\tG/\tP$ and the stationary
subgroup of this action at $o=e\tP$ coincides with $i(P)$.

All such non-trivial inclusions in complex analytic case were
described by A.~Onishchik~\cite{onish1} (see also~\cite[\S15]{onish2}
for more details). It appears that if $G$ is simple, there is a very
limited number of such examples.

\begin{thm}[Onishchik, \cite{onish1}] Let $M=G/P$ be a complex parabolic homogeneous space.
Let $\tG=(\Bih M)^o$ be the connected component of the group of all
biholomorphic automorphisms of $M$. If $G$ is simple, then $\tG$ is also
simple. Moreover, $\tG$ always coincides with $G$ with the following
exceptions:
\begin{enumerate}
\item $G=PSp(2l,\C)$, $P=P_\Sigma$, where $\Sigma=\{\al_2,\dots,\al_l\}$, $M=\CP^{2l-1}$, $\tG=PSL(2l,\C)$;
\item $G=G_2$, $P=P_{\{\al_2\}}$, $M=Q^5$, $\tG=PSO(7,\C)$;
\item $G=SO(2l+1,\C)$, $P=P_\Sigma$, where $\Sigma=\{\al_1,\dots,\al_{l-1}\}$, $M=I^o\Gr_{l+1}(\C^{2l+2})$, $\tG=PSO(2l+2,\C)$.
\end{enumerate}
\end{thm}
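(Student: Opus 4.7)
The plan is to identify the Lie algebra of $\tG$ with the space $H^0(M,TM)$ of global holomorphic vector fields on the projective rational homogeneous space $M=G/P$. Since $M$ is compact Kähler, $\Bih(M)$ is a complex Lie group whose Lie algebra is exactly this space. The $G$-action on $M$ gives a homomorphism $\g\to H^0(M,TM)$ whose kernel is an ideal in the simple algebra $\g$, and since the action is non-trivial the kernel must vanish. Thus $\g$ sits inside the Lie algebra of $\tG$, and the theorem reduces to the question of when this inclusion is proper.

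First I would compute $H^0(M,TM)$ as a $G$-module using the Bott--Borel--Weil framework. The tangent bundle $T(G/P)$ is the homogeneous vector bundle associated with the $P$-module $\g/\pp\cong\g_-$; its global sections decompose as a direct sum of irreducible $G$-modules whose highest weights are governed by the Kostant representatives $w\in W^P$ for which $w\cdot\lambda$ is dominant, $\lambda$ running through the highest weights of $\g/\pp$ after the $\rho$-shift. The identity element $w=e$ always contributes the adjoint representation, recovering the inclusion $\g\hookrightarrow\operatorname{Lie}(\tG)$; the theorem therefore comes down to identifying when additional Kostant representatives contribute further irreducible summands.

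Next I would carry out a type-by-type search over all simple complex Lie algebras $\g$ and all parabolic subalgebras $\pp=\pp_\Sigma$, checking for each pair whether some non-trivial $w\in W^P$ applied to a weight of $\g/\pp$ lands in the dominant chamber after the $\rho$-shift. The Dynkin diagram combinatorics single out exactly the three families listed in the theorem. In each exceptional case the extra irreducible summand, combined with $\g$, assembles into a concrete larger simple Lie algebra -- $\sll(2l,\C)$, $\so(7,\C)$ and $\so(2l+2,\C)$ respectively -- via the classical embeddings of $\mathfrak{sp}(2l,\C)$, $G_2$ and $\so(2l+1,\C)$. Outside these cases the computation returns only the adjoint summand, so $\operatorname{Lie}(\tG)=\g$, which yields that $\tG$ coincides with $G$ up to a discrete kernel; in either case the resulting $\tG$ is simple.

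The main obstacle is the combinatorial Kostant scan itself: one must verify exhaustively, across all simple Dynkin diagrams and all admissible choices of $\Sigma\subseteq\Delta$, that no non-identity element of $W^P$ produces a further dominant weight from $\g/\pp$ except in the three listed configurations, and this requires a careful type-by-type analysis. A secondary technical point is passing from the Lie-algebra identification back to the group level, confirming that $\tG=(\Bih M)^o$ really is the connected simple complex Lie group with the computed Lie algebra, so that the simplicity assertion holds for $\tG$ itself and not merely for its Lie algebra.
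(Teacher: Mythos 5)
The paper offers no proof of this statement at all: it is imported verbatim from Onishchik \cite{onish1} (see also \cite{onish2}), whose own argument proceeds through the classification of transitive actions of compact Lie groups and the resulting factorizations $\tG=G\cdot\tP$, not through sheaf cohomology. So there is no internal proof to compare yours against; what can be assessed is whether your route would stand on its own.

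Your strategy --- identify $\operatorname{Lie}(\Bih M)$ with $H^0(M,TM)$, observe that $\g\to H^0(M,TM)$ is injective because $\g$ is simple and the action is nontrivial, and then decide via Bott--Borel--Weil when the inclusion is proper --- is the standard modern route and is sound in outline, but as written it has two genuine gaps. First, the entire substance of the theorem is the exhaustive scan you defer to the final paragraph: one must actually verify, for every simple type and every parabolic $\pp_\Sigma$, that no nonidentity element of $W^P$ produces an additional dominant weight, and this verification is made more delicate by the fact that $\g/\pp$ is not an irreducible $P$-module --- it is only filtered, with completely reducible graded pieces $\g_{-1},\dots,\g_{-k}$ --- so BBW must be applied to each graded piece of $T(G/P)$ and the contributions reassembled through the long exact sequences of the filtration. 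Until that case analysis is performed, the argument establishes only $\g\subseteq\operatorname{Lie}(\tG)$. Second, knowing the $G$-module decomposition $H^0(M,TM)=\g\oplus V$ does not determine the Lie bracket on $H^0(M,TM)$, so the claim that the extra summand ``assembles into'' $\sll(2l,\C)$, $\so(7,\C)$ or $\so(2l+2,\C)$ needs an argument. The clean repair is to exhibit the known transitive actions of $PSL(2l,\C)$, $PSO(7,\C)$ and $PSO(2l+2,\C)$ on $\CP^{2l-1}$, $Q^5$ and $I^o\Gr_{l+1}(\C^{2l+2})$, which give the a priori inclusion $\tilde{\g}\subseteq H^0(M,TM)$, and then conclude equality by comparing dimensions with the BBW output; this also settles simplicity of $\tG$ at the group level. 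With these two pieces supplied, your proposal would be a complete proof, genuinely different in method from Onishchik's original one.
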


The first exceptional geometries are the complex versions of the so called
projective contact structures.
The real split form of the symplectic algebra
is the only one allowing this complexified parabolic subalgebra, cf.
\cite[Section 2.3]{capslovak}. The inclusion can be also nicely interpreted with the help
of the distinguished geodesics related to the projective geometry and these
questions have been studied in great detail by D. Fox, \cite{fox}.

In  the second case, $Q^5$ denotes the quadric
in $\CP^6$ given by the equation $(z,z)=0$,
where the scalar product on $\C^7$ is given by the standard $SO(7,\C)$
representation, and $G_2$ is embedded into $SO(7,\C)$ by its unique (up to the
conjugation) irreducible 7-dimensional representation, which has an invariant
non-degenerate symmetric form.
Thus this corresponds to the Nurowski's
example above. Again, the split real form is the only one allowing this
complexified parabolic subalgebra.

The space $I^o\Gr_{l+1}(\C^{2l+2})$ denotes the connected component of
the manifold of isotropic Lagrangian subspaces in $\C^{2l+2}$, which
contains $V_0=\langle e_1,\dots,e_{l+1}\rangle$. Here
$\{e_1,\dots,e_{2l+2}\}$ is a basis in $\C^{2l+2}$, such that
$SO(2l+2,\C)$-invariant symmetric form has the matrix:
\[
\begin{pmatrix} 0 & E_{l+1} \\ E_{l+1} & 0 \end{pmatrix}.
\]
In order to understand better
the third example in the list of exceptions, let us work out
the explicit description of the algebraic inclusion. Again, only the split
real form allows for such parabolics and so we shall deal with these real
Lie algebras.


Let us identify $SO(l,l+1)$ with the set
of matrices preserving the following symmetric form:
\[
\begin{pmatrix}
0 & 0 & E_{l} \\
0 & 1 & 0 \\
E_{l} & 0 & 0
\end{pmatrix}
.
\]
Then the embedding $i\colon G\to\tG$ can be described infinitesimally by the
following injective mapping of the corresponding Lie algebras:
\begin{equation}\label{incl}
\begin{gathered}
\alpha\colon \so(l,l+1)\mapsto\so(l+1,l+1),
\\
\begin{pmatrix}
A & X & Y \\
-{}Z^t & 0 & -{}X^t \\
T & Z & -{}A^t
\end{pmatrix}
\mapsto
\begin{pmatrix}
A & \frac{1}{\sqrt{2}} X & \frac{1}{\sqrt{2}} X & Y \\
- \frac{1}{\sqrt{2}} Z^t & 0 & 0 & - \frac{1}{\sqrt{2}} X^t \\
- \frac{1}{\sqrt{2}} Z^t & 0 & 0 & - \frac{1}{\sqrt{2}} X^t \\
T & \frac{1}{\sqrt{2}} Z & \frac{1}{\sqrt{2}} Z & -A^t
\end{pmatrix}
\end{gathered}
\end{equation}
where $A, Y, T\in\Mat_{l}(\R)$, $X,Z\in \R^l$, $Y+Y^t=T+T^t=0$.

Clearly, the $P$--module structure on $\g$ reveals that the filtration of
a parabolic geometry of type $(G,P)$ is given by the distribution of
rank $l$ on a manifold of dimension $\frac12(l+1)l$. Since the first
cohomology $H^1(\g_-,\g)$ concentrates in negative homogeneities only, the
filtration determines the normal parabolic geometry completely, cf.
\cite[Section 4.3]{capslovak}. Concerning the geometry of type $(\tG,\tP)$,
we have mentioned already that this is one of the examples of geometries
with trivial filtration and determined by a classical G--structure, which is
given by an identification of $TM$ with the second exterior tensor power
of an auxiliary vector bundle $S$ of dimension $l+1$, cf.
\cite[Section 4.1]{capslovak}.

\begin{thm}\label{cor1}
The only inclusions of real
parabolic geometries with simple Lie groups $G$ and
$\tG$ are the following ones:
\begin{enumerate}
\item The obvious projective structure induced by the contact projective
geometries (see \cite{fox} and \cite[Section 4.5]{capslovak}).
\item The Cartan's example of distributions with grows vectors $(2,3,5)$ on
five--dimensional manifolds carrying the conformal Riemannian geometry of
signature $(3,2)$ (see \cite{nurowski}).
\item The generic free distribution with grows vector $(l,\frac12(l+1)l)$,
$l\ge 3$ carrying the spinorial structures (well known only in
dimension $l=3$, \cite{bryant}).
\end{enumerate}
\end{thm}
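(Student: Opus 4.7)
The plan is to reduce the real classification to Onishchik's complex classification (already quoted in the theorem preceding this one) and then determine which real forms of the complex Lie groups $G$ and $\tG$ actually support the required parabolic subalgebras. Since an inclusion of parabolic geometries in the sense defined in Section 3 is an algebraic datum of a homomorphism $i:G\to\tG$ satisfying $i(G)\tP=\tG$ and $i(P)=i(G)\cap\tP$, complexification yields an inclusion of complex parabolic homogeneous spaces, so each real inclusion sits above one of the three complex cases listed in Onishchik's theorem.

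First, I would note that each of the three Onishchik cases specifies a particular complex parabolic subalgebra $\pp\subset\g$, and the existence of a real form of $G$ containing a real parabolic with that complexification is very restrictive. For the first case, $\g=\mathfrak{sp}(2l,\C)$ with the parabolic corresponding to the contact grading, only the split real form $\mathfrak{sp}(2l,\R)$ admits the required contact grading; this is the remark cited from \cite[Section 2.3]{capslovak}. For the second case, $\g_2$ equipped with the parabolic stabilizing an isotropic line in the 7-dimensional representation, again only the split real form admits the corresponding contact grading (the compact form has no parabolic subalgebras at all). For the third case, $\mathfrak{so}(2l+1,\C)$ with the parabolic stabilizing a flag of isotropic subspaces of dimensions $1,2,\dots,l$, the required $|1|$-grading (reading off the filtration as a rank $l$ distribution on a manifold of dimension $l(l+1)/2$ as observed after~\eqref{incl}) exists only for the split real form $\mathfrak{so}(l,l+1)$, since the signature must allow a maximally isotropic flag of the appropriate length.

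Next, in each of the three split real cases I would exhibit the real inclusion explicitly: for (1) the standard inclusion $Sp(2l,\R)\hookrightarrow SL(2l,\R)$ restricted to the contact parabolic, for (2) the inclusion $G_2\hookrightarrow SO(4,3)$ coming from the 7-dimensional representation preserving a non-degenerate symmetric form of signature $(4,3)$, and for (3) the explicit embedding $\alpha$ written down in~\eqref{incl}. In each case the three algebraic conditions from Section 3 (infinitesimal injectivity, openness of the $G$-orbit, $Q=P$) must be verified. For (1) and (2) this is classical; for (3) it follows by direct inspection of~\eqref{incl}: the image is visibly infinitesimally injective, the quotient $\tilde\g/\tilde\pp$ is exhausted by the image of the $l\times l$ skew-symmetric block $T$ together with the $X,Z$ blocks so the induced map $\g/\pp\to\tilde\g/\tilde\pp$ is surjective, and the stabilizer $i^{-1}(\tP)$ equals $P$ because the compatible filtrations match.

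The main obstacle is the real-form bookkeeping in step two: one must systematically rule out non-split real forms, since complexification is blind to signature but the parabolic subalgebra structure depends on it. The cleanest way is to observe that each of the three complex parabolic subalgebras in Onishchik's list is either a contact grading or the $|1|$-grading associated to the first $l-1$ simple roots of $B_l$, and that such gradings correspond to real parabolics only when the underlying real form admits a flag of the correct isotropic type, which forces the split real form in every case. Once this is established, the three examples in the statement are precisely the real realizations of the three complex cases, completing the classification.
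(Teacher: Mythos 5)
Your proposal follows essentially the same route as the paper: reduce to the homogeneous model, complexify, invoke Onishchik's list, observe that each of the three complex parabolic subalgebras is realized only by the split real form, and exhibit the explicit real embeddings. The only (inessential) slips are terminological --- the $B_l$-parabolic in case (3) gives a $|2|$-grading (a genuine two-step filtration by a rank-$l$ distribution), not a $|1|$-grading, and the $G_2$-parabolic for $(2,3,5)$-distributions is a $|3|$-grading rather than a contact grading --- but neither affects the isotropic-flag argument forcing the split real form.
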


\begin{proof} If an inclusion of parabolic geometries of given types should
exist, then there must be the corresponding inclusion of the homogeneous
spaces. However, at the level of the Lie groups everything is realized by
real analytic objects. Thus, complexifying, there must exist the appropriate
inclusion in the holomorphic category and the Onishchik's list together with
the above observations complete the proof.
\end{proof}

In the rest of this paper, we shall work out more details on the new series of
examples. In order to understand the functorial construction of the spinorial
geometry from the rank $l$ distribution, we need a bit more knowledge of both
geometries. On the other hand, we shall see that the $G_0$--module structure of $\g_-$ and the
standard representations of $G$ and $\tG$ are enough to construct quickly the
almost spinorial structure on $M$ directly from the Cartan connection
$(\G,\om)$ associated with the rank $l$ distribution $D$ on $M$. The more
interesting and difficult questions are:
\begin{itemize}
\item \emph{How much of
the Cartan connection $\om$ do we need to recover the spinorial geometry?}
\item \emph{Under which conditions will the induced spinorial Cartan connection
$\tilde \om$ be normal again?}
\end{itemize}
We shall come back to these questions in the subsequent sections.

Now, let us consider the
associated \emph{standard tractor bundle} $\A M = \G \times_P V$, where $V$ is
the standard $SO(l, l+1)$ representation. The action of the parabolic subgroup
$P$ preserves the filtration $V=V^0\supset V^1\supset V^2\supset {0}$, where
$\dim V^2 = l$, $\dim V^1 = l+1$. It induces the filtration of the tractor
bundle:
\[
\A M =  \A^0M\supset \A^1M\supset \A^2M \supset 0.
\]

As a $G_0$ module, the standard representation splits as $V = \R^l \oplus \R
\oplus \R^l$, where $\R$ is the trivial representation while $\R^l = V/V^1$
is isomorphic to $\g_{-1}$. Thus, the $G_0$--module
$$
S=V/V^2 = \R\oplus \R^l
$$
has the property $\wedge^2 S=\g_-$. It is easy to see, that this $G_0$
module structure is compatible with the inclusion $G_0\to \tilde G_0$.

At the level of the parabolic geometry determined by the distribution $D$,
we simply consider the auxiliary vector bundle $\mathcal S = \G\times_P S$ and
we see that the tangent bundle $TM$ is naturally isomorphic to $\wedge^2
\mathcal S$. Since the identification is compatible with the inclusion of
the reductive parts of the parabolic subgroups, this is the right spinorial
structure as obtained from the general construction.

\section{Canonical Cartan connection for length 2 distributions}
\label{sec:3}

Let us notice, that we have not exploited the entire Cartan connection $\om$
in the construction above. Rather we have only used the splitting of the
$G_0$--modules $\g_-$ and $V$. Moreover, only the homogeneity one part of
the total splitting of $V$ was necessary (we have split $V/V^2$ only).

In the Cartan--Tanaka procedure, this amount of information is obtained
after the first prolongation step (the bottom up approach). The construction
in \cite[Section 3.1]{capslovak} provides the entire Cartan connection and
the complete information on the structure of the curvature, without the
explicit prolongation steps. We shall combine
these two approaches by using the detailed knowledge on the curvature during
the explicit prolongation computations. For the sake of simplicity, we shall
ignore the lowest dimensional case with $l=3$ since the curvature structure
is different and this case is well known.

Let $D$ be a rank $l\ge4$ vector distribution on a manifold $M$ of dimension
$l(l+1)/2$. We say that $D$ is non-degenerate, if $D+[D,D]=TM$. In other words, if
$X_1, \dots, X_l$ is any local basis of sections of $D$, then the vector fields
$X_i, [X_j,X_k]$, $1\le i \le l$, $1\le j<k\le l$ should form a basis of the
tangent space $TM$ at all points where sections $X_i$ are defined.

Under this non-degeneracy condition, the general theory implies that there
is a natural regular and normal Cartan connection
of type $(G,P)$ on $M$, with the corresponding pair of Lie algebras $(\g,\pp)$
given by:
$$
\g = \left\{\begin{pmatrix}
A & X & Y \\
-{}Z^t & 0 & -{}X^t \\
T & Z & -{}A^t
\end{pmatrix} \right\},\qquad
\pp = \left\{\begin{pmatrix}
A & 0 & 0 \\
-{}Z^t & 0 & 0 \\
T & Z & -{}A^t
\end{pmatrix}\right\},
$$
where $A,\ Y,\ T\in\Mat_{l}(\R),\ X,\ Z\in \R^l,\
Y+Y^t=T+T^t=0$.

\begin{thm}[\cite{tanaka}, \cite{capslovak}]\label{thm2}
For each non--degenerate distribution of rank $l$ on a manifold of dimension
$\frac12(l+1)l$, there
is the unique regular normal Cartan connection
of type $(G,P)$ on $M$ (up to isomorphisms).
The only fundamental invariant of these
parabolic geometries is concentrated in the homogeneity degree $1$ and
corresponds to the totally trace-free part of the $\sll(l,\R)$-submodule
$\Hom(\g_{-1}\wedge\g_{-2},\g_{-2})$ in the curvature.
\end{thm}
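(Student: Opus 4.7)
The plan is to reduce both claims to the Tanaka theory as developed in \cite{tanaka, capsch, capslovak} combined with a Kostant cohomology computation for the particular $|2|$--grading of $\so(l,l+1)$ whose matrix form is displayed just above the theorem. First I would check that a non-degenerate rank $l$ distribution $D\subset TM$ on a manifold of dimension $l(l+1)/2$ is precisely the underlying regular infinitesimal flag structure of type $(\g,\pp)$. From the explicit matrices one reads off $\g_{-1}\cong\R^l$ (the $Z$ slot) and $\g_{-2}\cong\wedge^2\R^l$ (the skew-symmetric $T$), and the Lie bracket $\g_{-1}\wedge\g_{-1}\to\g_{-2}$ is, up to sign, the wedge--product map, hence surjective. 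Non-degeneracy $D+[D,D]=TM$ then matches this surjectivity pointwise, so $\{D\subset TM\}$ is a regular filtration with symbol algebra $\g_-$.

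Second, I would invoke the general existence and uniqueness theorem for regular normal Cartan connections over filtered manifolds with prescribed symbol, cf.\ \cite[Section 3.1]{capslovak}. The hypothesis to be verified is the vanishing $H^1(\g_-,\g)_i=0$ for homogeneities $i\ge 0$; this is a Kostant computation for the specific $B_l$ parabolic, and matches the remark already made before Theorem~\ref{cor1} that the only components of $H^1(\g_-,\g)$ sit in negative homogeneities. With this in hand, both the existence of an underlying $G_0$--structure and the full prolongation to a unique (up to isomorphism) regular normal Cartan connection of type $(G,P)$ follow from the cited theorem.

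For the second assertion I would compute $H^2(\g_-,\g)$ via Kostant's version of the Bott--Borel--Weil theorem \cite{kost, silhan}. The recipe is to list all elements $w$ of the Weyl group of length $2$ that are of Hasse type with respect to the parabolic $\pp$, apply the affine action $w\cdot 0=w(\rho)-\rho$ to read off the dominant integral weight of the corresponding irreducible $\g_0$--module, and compute its geometric homogeneity from the pairing with the grading element. For $l\ge 4$ this produces a single surviving irreducible summand in homogeneity $1$. Matching its highest weight with the $GL(l,\R)$--decomposition of $\Hom(\g_{-1}\wedge\g_{-2},\g_{-2})$ (which for $\g_{-1}\cong\R^l$ and $\g_{-2}\cong\wedge^2\R^l$ splits into the totally trace-free piece and the trace components carried by contractions with the $\sll(l,\R)$--invariant pairings) identifies the component with the totally trace-free part, as claimed.

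The main obstacle is the third step: pinning down exactly which length--$2$ Weyl element delivers the surviving component, verifying by an explicit weight comparison that this component is the trace-free summand of $\Hom(\g_{-1}\wedge\g_{-2},\g_{-2})$, and checking that no other component appears in higher homogeneities (which is precisely where the hypothesis $l\ge 4$ is used; the case $l=3$ is excluded by the authors because the cohomological pattern degenerates there). The harmonic curvature then generates the full curvature by the general result of \cite{cap-twistor} quoted at the end of Section \ref{sec:2a}, so the indicated module is indeed the only fundamental invariant.
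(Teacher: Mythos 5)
Your proposal follows essentially the same route as the paper, whose entire proof consists of the citations you unpack: Tanaka theory (via \cite[Section 3.1]{capslovak}) for existence and uniqueness of the regular normal Cartan connection, and Kostant's algorithm \cite{kost,silhan} for identifying the single harmonic curvature module in homogeneity one. One cosmetic correction: in the paper's conventions the $X$ and $Y$ slots of the displayed matrices span $\g_{-1}$ and $\g_{-2}$ while the $Z$ and $T$ slots span $\pp_+$, but since $\pp_+\cong(\g/\pp)^*$ this relabeling does not affect your argument.
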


\begin{proof}
The existence and uniqueness 
of the Cartan connection follow from the Tanaka theory (see also
\cite[Section 3.1]{capslovak}), for
the explicit computation of the curvature see \cite[Section 4.3]{capslovak}
or compute following the algorithm of Kostant, see \cite{kost, silhan}.
\end{proof}

In more detail, let $\pi_1, \dots,
\pi_{l-1}$ be the fundamental weights of $\sll(l,\R)$. Then $\g_{-1}$ has
weight $\pi_1$, $\g_{-2}$ is isomorphic to $\wedge^2\g_{-1}$ and has weight
$\pi_2$, and by the trace-free part of $\Hom(\g_{-1}\wedge\g_{-2},\g_{-2})$ we
mean the unique submodule with the highest weight $\pi_2 + \pi_{l-2} +
\pi_{l-1}$.

\begin{rem}
Unlike the case of rank 3 distributions, the fundamental invariant for this
Cartan connection is a part of the torsion for distributions of rank $l\ge4$.
In particular, in this case any regular and
normal torsion-free geometry is automatically
flat, i.e. locally isomorphic to its homogeneous model.
\end{rem}

Now we start our computations.
Let $\{X_1,\dots,X_l\}$ be any (local) frame of $D$. Denote then by $X_{[ij]}$
vector fields $-[X_i,X_j]$. Then vector fields $\{ X_i, X_{[jk]} \}$ will form
the frame on $M$. Denote by $\{ \th^i, \th^{[jk]} \}$ the dual coframe and by
$D^{\perp}$ the set of all 1-forms on $M$ annihilating $D$. It is clear that
$D^{\perp}$ is generated by $\th^{[jk]}$.

Note that
\[
d\th^{[jk]}(X_j, X_k) = - \th^{[jk]}([X_j,X_k]) = 1.
\]
This implies that
\[
d\th^{[jk]} = \th^j\wedge\th^k \mod \langle \th^{[rs]}\rangle.
\]
So, the structure equations of the coframe $\{\th^i,\th^{[jk]}\}$ have
the form:
\begin{equation}\label{streq}
\begin{aligned}
d\th^r &= f^r_{i[jk]}\th^i\wedge\th^{[jk]} + f^r_{[[ij][kl]]}\th^{[ij]}\wedge \th^{[kl]}, \\
d\th^{[rs]} &= \th^r\wedge \th^s + f^{[rs]}_{i[jk]}\th^i\wedge\th^{[jk]} +
f^{[rs]}_{[[ij][kl]]}\th^{[ij]}\wedge \th^{[kl]},
\end{aligned}
\end{equation}
where $f^r_{i[jk]}$, $f^r_{[[ij][kl]]}$, $f^{[rs]}_{i[jk]}$,
$f^{[rs]}_{[[ij][kl]]}$ are the structure functions of the coframe $\{\th^i,
\th^{[jk]}\}$ on $M$ uniquely determined by the choice of the frame
$\{X_1,\dots,X_l\}$. Note that these families of functions do not form any
tensor, since their transformation rule under the change of the frame involves
derivatives. The natural Cartan connection associated with the distribution $D$
will allow us to construct the coframes behaving much nicer and we shall
obtain the components of the curvature tensor at the same time.

Let $\pi\colon \G\to M$ be any principle $P$-bundle on $M$ and
$\om\colon T\G\to \g$ any regular Cartan connection of type $G/P$. For any
section $s\colon M\to\G$ we can write explicitly:
\[
s^*\om = \begin{pmatrix} \om^i_j & \om^i & \om^{[ij]} \\ -\om_j & 0 &
  -\om^i \\ \om_{[ij]} & \om_j & -\om^j_i\end{pmatrix}
\]
where $\om^{[ij]}$, $\om_{[ij]}$, $\om^i_j$, $\om^i$, $\om_j$ are 1-forms on
$M$.

We say that the Cartan connection $(\G,\om)$ is \emph{adapted to the
distribution $D$}, if $D=\langle \om^{[ij]}\rangle^\perp$. It is easy
  to see that this definition does not depend on the choice of the
  section $s$.

Let $(G,\om)$ be an adapted Cartan connection. Then we have
\[
 D^\perp = \langle \om^{[ij]} \rangle = \langle \th^{[ij]} \rangle,
\]
where, as above, the forms $\th^{[ij]}$ are defined by fixing a frame
$\{X_1,\dots,X_l\}$ on $D$.

We can always choose such section $s\colon M \to \G$ that
\begin{equation}\label{norm0}
\om^i = \th^i \mod D^{\perp}.
\end{equation}
This condition defines $s$ uniquely up to the transformations $s\to sg$, where
$g\colon M\to P_{+}$ is an arbitrary $P_{+}$-valued gauge transformation.

Consider the component $\Om^{[ij]}$ of the curvature tensor:
\[
\Om^{[ij]}=d\om^{[ij]} - \om^i\wedge\om^j+\om^i_k\wedge
\om^{[kj]}-\om^j_k\wedge\om^{[ik]}
\]
(here and below we use the Einstein summation convention).

Let us remind that $\om$ is regular and so only positive homogeneities may
appear in the curvature. This immediately implies that
\[
d\om^{[ij]} = \om^i\wedge\om^j = \th^i\wedge\th^j \mod D^{\perp},
\]
and, hence
\begin{equation}\label{norm01}
\om^{[ij]}=\th^{[ij]}\quad \mbox{for all $1\le i<j\le l$.}
\end{equation}

Compute now the curvature coefficients of degree $1$ together with the
section normalizations of the form $s\mapsto \tilde s = sg$, where $g$ takes
values in $\exp{\g_1}$. Any such transformation leads to the
following transformation of the pull-back forms $\tilde s^*\om$:
\begin{align*}
\tom^{[ij]} & = \om^{[ij]},\\
\tom^{i} & = \om^i + p_j\om^{[ij]},
\end{align*}
where functions $p_j$ define the mapping $dg\colon M\to \g_1$.
Assume that
\begin{align*}
\om^i &= \th^i + C^i_{[jk]}\om^{[jk]},\\
\om^i_j &= A^i_{kj}\om^k \mod D^{\perp}
\end{align*}
for some functions $A^i_{kj}$, $C^i_{[jk]}$. They are transformed by the gauge
transformation $g$ according to the following formula:
\begin{align*}
\tilde A^i_{jk} & = A^i_{jk} - \delta^i_jp_k;\\
\tilde C^i_{[jk]} & = C^i_{[jk]} + \delta^i_{[j} p_{k]}.
\end{align*}
We can always find such functions $p_i$ that $\sum_i A^i_{ik} = 0$. This
determines the section $s$ uniquely up to the transformations $s\mapsto sg$,
where $g$ takes values in $\exp(\g_2)$.

We have:
\begin{align*}
\Om^{[ij]} &= P^{[ij]}_{r[st]}\om^r\wedge\om^{[st]} \mod
\wedge^2D^\perp,\\
\Om^i &= Q^i_{[rs]}\om^r\wedge\om^s \mod \Lambda^1(M)\wedge D^{\perp}.
\end{align*}
where
\begin{align*}
P^{[ij]}_{r[st]} &= f^{[ij]}_{r[st]} + \delta^{[i}_{[s}A^{j]}_{rt]} +
\delta^{[i}_r C^{j]}_{[st]},\\
Q^{i}_{[jk]} &= C^i_{[jk]}+A^i_{[jk]}.
\end{align*}
Assuming that the connection does not have the torsion in the term
$\Hom(\wedge^2\g_{-1},\g_{-1}$) (as it is implied from the normality assumption
via Kostant theorem), we get $Q^i_{[jk]}=0$, that is
\[
C^i_{[jk]} = - A^i_{[jk]}.
\]

Substituting this equality to the expression of $P^{[ij]}_{r[st]}$ we get:
\begin{equation}\label{tensorP}
P^{[ij]}_{r[st]} = f^{[ij]}_{r[st]} +
\delta^{[i}_{[s}A^{j]}_{rt]}-\delta^{[i}_r A^{j]}_{[st]}.
\end{equation}

Let us remind that $l\ge4$. It turns out that we can uniquely
determine coefficients $A^i_{jk}$ by the following two conditions:
\begin{itemize}
\item $\sum_i A^i_{ik}=0$ according to the choice of the section $s$;
\item tensor $P^{[ij]}_{r[st]}$ is totally trace-free.
\end{itemize}
Indeed, the traces of $A$'s do not contribute in the right hand side and the
trace--free condition on $P$ just determines the rest.

\begin{thm}\label{thm3}
The trace-free part of the tensor $P^{[ij]}_{r[st]}$ computed above
is the only fundamental invariant of the non--degenerate rank $l$ distribution
$D$ on a manifold of dimension $\frac12(l+1)l$. Hence, the Cartan
connection associated with $D$ is flat if and only if this tensor vanishes
identically.
\end{thm}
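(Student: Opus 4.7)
The plan is to identify the trace-free part of the tensor $P^{[ij]}_{r[st]}$ with the unique harmonic curvature component predicted by Theorem~\ref{thm2}, and then to invoke the general reconstruction principle to conclude flatness. Recall that by Theorem~\ref{thm2} the only nonzero harmonic curvature component $\kappa_H\in\ker\partial\cap\ker\partial^*$ sits in homogeneity one and lies in the $\sll(l,\R)$-submodule of $\Hom(\g_{-1}\wedge\g_{-2},\g_{-2})$ of highest weight $\pi_2+\pi_{l-2}+\pi_{l-1}$, that is, the totally trace-free part. It therefore suffices to show (i) that $P^{[ij]}_{r[st]}$ represents exactly the $\Hom(\g_{-1}\wedge\g_{-2},\g_{-2})$-component of the homogeneity-one curvature, and (ii) that its trace-free projection coincides with the harmonic representative, hence is a well-defined pointwise invariant of $D$.

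For step (i), the pull-back $s^*\Om^{[ij]}$ has no component of homogeneity $0$ by regularity, and its homogeneity-one part is by definition the coefficient of $\om^r\wedge\om^{[st]}$, which is exactly $P^{[ij]}_{r[st]}$ as displayed in~\eqref{tensorP}. Thus $P$ is the value of the curvature function $\kappa$ on arguments in $\g_{-1}\times\g_{-2}$ projected to $\g_{-2}$, computed in the chosen frame. For step (ii), I would apply the explicit formula for the Kostant codifferential $\partial^*$ given in Section~\ref{sec:2a} to a cochain in $\Hom(\g_{-1}\wedge\g_{-2},\g_{-2})\subset\wedge^2\pp_+\otimes\g$. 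A direct computation shows that the condition $\partial^*\kappa=0$ for normality in this bidegree amounts to the vanishing of certain $\sll(l,\R)$-traces of $P$. These are precisely the traces absorbed, on one side, by the choice of section $s$ making $\sum_iA^i_{ik}=0$, and, on the other, by the supplementary trace-free requirement on $P$ that pins down the remaining components of $A^i_{jk}$.

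The main obstacle I anticipate is exactly this bookkeeping: one must match the two families of trace conditions with the $\sll(l,\R)$-decomposition of $\Hom(\g_{-1}\wedge\g_{-2},\g_{-2})$ and check that only the irreducible component of highest weight $\pi_2+\pi_{l-2}+\pi_{l-1}$ survives. Once this is in place, the trace-free part of $P^{[ij]}_{r[st]}$ is the harmonic representative $\kappa_H$ of the curvature. For the flatness assertion I would then invoke the result of \v Cap \cite{cap-twistor}, already cited in Section~\ref{sec:2a}, which states that the full curvature of a normal parabolic geometry is reconstructed by a natural differential operator from its harmonic part. Combined with Theorem~\ref{thm2}, this yields that $\om$ is flat if and only if $\kappa_H=0$, which in turn is equivalent to the vanishing of the trace-free part of $P^{[ij]}_{r[st]}$.
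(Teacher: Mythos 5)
Your proposal is correct and follows essentially the same route as the paper: the paper's proof is precisely the observation that the computations of Section 4 identify the (suitably normalized, trace-free) tensor $P^{[ij]}_{r[st]}$ with the homogeneity-one curvature component in $\Hom(\g_{-1}\wedge\g_{-2},\g_{-2})$, which Theorem~\ref{thm2} singles out as the unique harmonic (fundamental) invariant, and the flatness claim then follows from the general fact that the full curvature is reconstructed from its harmonic part. Your extra care about matching the trace conditions with the choice of section and of the coefficients $A^i_{jk}$ is exactly the bookkeeping the paper performs just before the theorem.
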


\begin{proof}
The statement is a direct consequence of Theorem~\ref{thm2} and the
computations above.
\end{proof}

In particular, this gives us an explicit condition when an
arbitrary non-degenerate distribution $D$ of rank $l\ge4$ is locally equivalent
to the left-invariant distribution on the nilpotent Lie group corresponding
to the algebra~$\g_{-}$.

\begin{rem}
Let us also comment on the link between the
coefficients $A$ and $C$.
In general terms, these objects correspond to the choice of partial
Weyl connections (the coefficients $A^i_{jk}$) and the splittings of the
filtration (the improvement of the coframe by deforming $\th^i$), cf.
\cite[Chapter 5]{capslovak}. Both of
these objects have to be fixed together because they influence the same
curvature components in homogeneity one. This has been reflected by the
explicit link between $A$'s and $C$'s.

Having fixed these homogeneity one objects, all the necessary ingredients
for the construction of the spinorial geometry are available, see the end of
Section 3 above. Still, in general, we
cannot replace the normal Cartan connection of the distribution by 
the simpler construction of the spinorial normal Cartan connection
(although regularity is not an issue for one--graded geometry) since we do not
know whether the normality will be preserved.
\end{rem}

\section{Computation of coefficients of degree 2}
Let as continue the computation of the regular normal Cartan connection $\om$
associated with the distribution $D$ and its curvature tensor $\Om$. Let us now
consider curvature coefficients of degree~2 together with the normalization of
the section $s\mapsto sg$, where $g\in \exp(\g_2)$. Any such transformation
leads to the following transformation of the form $\tom$:
\begin{align*}
\tom^i_j &= \om^i_j + q_{[ik]}\om^{[kj]},\\
\tom_i  &= \om_i + q_{[ik]}\om^k \mod D^{\perp}.
\end{align*}
Assume that
\begin{align*}
\om^i_j &= A^i_{kj}\om^k + E^i_{j[kl]}\om^{[kl]},\\
\om_i &= F_{ki}\om^k \mod D^\perp,
\end{align*}
where the functions $A^i_{kj}$ were determined above, and $E^i_{j[kl]}$,
$F_{ki}$ are functions to be found. They are transformed by the gauge
transformation $g$ by the following formulas:
\begin{align*}
\tilde E^i_{j[kl]} &= E^i_{j[kl]} + \delta^i_kq_{[jl]}-\delta^i_lq_{[jk]},\\
\tilde F_{ki} &= F_{ki} + q_{[ki]}.
\end{align*}
We see that we can make $F_{kl}$ symmetric by an appropriate choice of
$q_{[kl]}$. This determines uniquely the section $s\colon M\to\G$.

Let us now proceed with computations of the curvature coefficients of degree 2.
We have
\begin{multline*}
\Om^{[ij]} = d\om^{[ij]} -\om^i\wedge\om^j +
\om^i_k\wedge\om^{[kj]}-\om^j_k\om^{[ik]} = \\
f^{[ij]}_{[[kl][rs]]}\om^{[kl]}\wedge\om^{[rs]} +
E^i_{k[rs]}\om^{[rs]}\wedge\om^{[kj]}-E^j_{k[rs]}\om^{[rs]}\wedge\om^{[ik]} =
R^{[ij]}_{[[kl][rs]]}\om^{[kl]}\wedge\om^{[rs]},
\end{multline*}
where
\begin{equation}\label{tensorR}
R^{[ij]}_{[[kl][rs]]} = f^{[ij]}_{[[kl][rs]]} +
\delta^{[i}_{[l}E^{j]}_{k][rs]}.
\end{equation}
Next,
\begin{multline*}
\Om^i = d\om^i + \om^i_k\wedge\om^k+\om^{[ik]}\wedge\om_k = \\
d\left(\th^i - A^i_{[jk]}\om^{[jk]}\right) +
\om^i_k\wedge\om^k+\om^{[ik]}\wedge\om_k = (\mmod \Lambda^2D^{\perp})\\
f^i_{r[st]}\om^r\wedge\om^{[st]} - \frac{\partial A^i_{[jk]}}{\partial
\om^l}\om^l\wedge\om^{[jk]} - A^i_{[jk]}f^{[jk]}_{r[st]}\om^r\wedge\om^{[st]} +
\\
E^i_{k[rs]}\om^{[rs]}\wedge\om^k+F_{jk}\om^{[ik]}\wedge\om^j =
S^i_{j[kl]}\om^j\wedge\om^{[kl]},
\end{multline*}
where
\begin{equation}\label{tensorS}
S^i_{j[kl]} = f^i_{j[kl]} - \frac{\partial A^i_{[kl]}}{\partial \om^j} -
A^i_{[rs]}f^{[rs]}_{j[kl]} - E^i_{j[kl]}-F_{j[l}^{}\delta^i_{k]}.
\end{equation}

Finally,
\begin{multline*}
\Om^i_j = d\om^i_j +
\om^i_r\wedge\om^r_j-\om^i\wedge\om_j+\om^{[ik]}\wedge\om_{[kj]} = (\mmod D^\perp)\\
d(A^i_{kj})\om^k + A^i_{kj}d(\th^k - A^k_{rs}\om^{[rs]}) +
E^i_{j[kl]}d\om^{[kl]} + \\
\left(A^i_{sr}\om^s\right)\wedge \left(A^r_{tj}\om^t\right) -
F_{kj}\om^i\wedge\om^k = T^i_{j[kl]}\om^k\wedge\om^l,
\end{multline*}
where
\begin{equation}\label{tensorT}
T^i_{j[kl]} = - \frac{\partial A^i_{[kj]}}{\partial \om^l} - A^i_{rj}A^r_{[kl]}\\
 + A^i_{[kr}A^r_{l]j}+E^i_{j[kl]}-\delta^i_{[k}F_{l]j}^{}.
\end{equation}

Let us now compute normalization conditions on coefficients $R,S,T$ that are
implied by the normality condition. In order to do this we explicitly compute
$\partial^*$ for each of these tensors.

Fix a basis in $\g$: $\g_{-2}=\langle E_{[ij]} \rangle$,
$\g_{-1}=\langle E_{i} \rangle$,
$\g_0=\langle E^i_j \rangle$,
$\g_1=\langle E^i \rangle$,
$\g_2=\langle E^{[ij]} \rangle$.
Let us note that Killing form $B$ on $\g$ is given by $\tr XY$, $X,Y\in\g$,
and, in particular, we have:
\begin{align*}
B(E_{[ij]}, E^{[ij]}) &= -2;\\
B(E_i, E^i) &= -2;\\
B(E^i_j, E^j_i) &= 2.
\end{align*}
For simplicity we can always multiply $B$ by $-1/2$ and assume that the dual
elements to $E_i$ and $E_{[jk]}$ are equal to $E^i$ and $E^{[jk]}$
respectively.

Using the identification $\g_{-}^*$ with $\pp_{+}$, we can write the tensor
$R\in \Hom(\wedge^2\g_{-2},\g_{-2})$ as an element of
$\wedge^2\g_2\otimes\g_{-2}$:
\[
 R = R^{[ij]}_{[[kl][rs]]} E^{[kl]}\wedge E^{[rs]}\otimes E_{[ij]}.
\]

Then we have:
\begin{multline*}
\partial^* R = R^{[ij]}_{[[kl][rs]]} \left(E^{[kl]}\otimes [E^{[rs]}, E_{[ij]}] - E^{[rs]}\otimes [E^{[kl]},
E_{[ij]}]\right) =\\
 \sum_j R^{[ij]}_{[[kj][rs]]}E^{[rs]}\otimes E^k_i.
\end{multline*}
So, we see that $\partial^* R$ lies in $\g_2\otimes\g_0$ and can be
symbolically written as $\partial^* R = \tr R$.

Similarly, we have
\[
 S = S^i_{j[kl]} E^j\wedge E^{[kl]}\otimes E_i,
\]
and
\begin{multline*}
\partial^*S = S^i_{j[kl]} \left(E^j\otimes [E^{[kl]}, E_i] - E^{[kl]}\otimes [E^j,
E_i]\right)= \\
\sum_i S^i_{j[il]}E^j\otimes E^l - S^i_{j[kl]}E^{[kl]}\otimes E^j_i.
\end{multline*}
Thus, we see that $\partial^*S$ lies in $\g_1\otimes\g_1 + \g_2\otimes\g_0$ and
can be symbolically written as $\partial^* S = \tr S - S$.

Finally, we have
\[
T = T^i_{j[kl]} E^k\wedge E^l\otimes E^j_i,
\]
and
\begin{multline*}
\partial^*T = T^i_{j[kl]}\left(E^k\otimes [E^l, E^j_i\right) - E^l\otimes
[E^k,E^j_i] - E^{[kl]}\otimes E^j_i)= \\
\sum_i T^i_{j[il]} E^j\otimes E^l - T^i_{j[kl]}E^{[kl]}\otimes E^j_i.
\end{multline*}
Consequently, $\partial^*T$ lies in $\g_1\otimes\g_1 + \g_2\otimes\g_0$ and
can be symbolically written as $\partial^*T = \tr T - T$.

Summarizing, we get
\begin{lem} Let $\kappa_2$ be the degree 2 part of the curvature of the
regular and normal $B_l$ geometry
associated with a non-degenerate distribution $D$.
Then it can be expressed as
a sum of three tensors:
\begin{align*}
R &\in \Hom(\g_{-2}\wedge\g_{-2},\g_{-2}),\\
S &\in \Hom(\g_{-1}\otimes\g_{-2},\g_{-1}),\\
T &\in \Hom(\g_{-1}\wedge\g_{-1},\g_{-1}).
\end{align*}
They satisfy the normality conditions:
\begin{align*}
\tr R - S - T &= 0;\\
\tr S + \tr T &= 0.
\end{align*}
\end{lem}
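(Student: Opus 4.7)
The plan is to read off the lemma directly from the explicit $\partial^{*}$ computations already carried out on $R$, $S$, and $T$. The decomposition of $\kappa_{2}$ into three tensors is forced purely by bigrading: a homogeneity~$2$ element of $\wedge^{2}\pp_{+}\otimes\g$ can involve only pairs $(\g_{i}\wedge\g_{j})^{*}\otimes\g_{k}$ with $-i-j+k=2$ and $k\ge -2$, and the regularity assumption plus the shape of $\g_{-}$ leaves exactly the three slots $\Hom(\g_{-2}\wedge\g_{-2},\g_{-2})$, $\Hom(\g_{-1}\otimes\g_{-2},\g_{-1})$, and $\Hom(\g_{-1}\wedge\g_{-1},\g_{-1})$, realized above as the coefficients of $\om^{[kl]}\wedge\om^{[rs]}$ in $\Om^{[ij]}$, $\om^{j}\wedge\om^{[kl]}$ in $\Om^{i}$, and $\om^{k}\wedge\om^{l}$ in $\Om^{i}_{j}$ respectively.

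For the normality relations, the plan is to impose $\partial^{*}\kappa_{2}=0$ and read the two equations by sorting terms according to their bidegree in $\pp_{+}\otimes\g$. Using the general formula for $\partial^{*}$ on two-chains together with the bracket relations in the contact grading $\g=\g_{-2}\oplus\cdots\oplus\g_{2}$, the three computations have already been done above: $\partial^{*}R$ lives entirely in $\g_{2}\otimes\g_{0}$ and, up to the sign conventions set by the chosen basis, equals $\tr R$; both $\partial^{*}S$ and $\partial^{*}T$ split as a $\g_{1}\otimes\g_{1}$ piece ($\tr S$, respectively $\tr T$) plus a $\g_{2}\otimes\g_{0}$ piece ($-S$, respectively $-T$).

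Now I would set $\partial^{*}(R+S+T)=0$ and collect by bidegree. The $\g_{2}\otimes\g_{0}$ component yields
\[
\tr R - S - T = 0,
\]
while the $\g_{1}\otimes\g_{1}$ component yields
\[
\tr S + \tr T = 0.
\]
This is the content of the lemma.

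The only genuine obstacle is bookkeeping: one must check that the three individual pieces of $\partial^{*}\kappa_{2}$ really live in the two distinct isotypic slots $\g_{2}\otimes\g_{0}$ and $\g_{1}\otimes\g_{1}$, with no hidden contribution in $\g_{1}\otimes\g_{2}$ or elsewhere, and that the sign/coefficient conventions (fixed by normalizing the Killing form so that $E^{i}$, $E^{[jk]}$ are dual to $E_{i}$, $E_{[jk]}$) really produce the signs displayed in the lemma. Both are direct consequences of the bracket table among $\{E_{i},E_{[jk]},E^{i}_{j},E^{i},E^{[jk]}\}$ used in the three displayed $\partial^{*}$ calculations above, so no new computation is needed beyond matching components.
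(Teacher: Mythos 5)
Your proposal is correct and follows essentially the same route as the paper: the paper's own argument for this lemma is precisely the preceding explicit computation of $\partial^*R$, $\partial^*S$, $\partial^*T$ via the two--chain formula for $\partial^*$, followed by sorting the condition $\partial^*(R+S+T)=0$ into its $\g_2\otimes\g_0$ component (giving $\tr R-S-T=0$) and its $\g_1\otimes\g_1$ component (giving $\tr S+\tr T=0$), exactly as you describe. One small remark: as your own homogeneity count shows, the degree--two slot over $\wedge^2\g_{-1}$ is valued in $\g_0$, not $\g_{-1}$ (the target $\g_{-1}$ in the lemma's statement of $T$ is a slip in the paper), and your identification of $T$ with the coefficient of $\om^k\wedge\om^l$ in $\Om^i_j$ is the correct reading.
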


\begin{rem} These normality conditions determine both coefficients $E^i_{j[kl]}$ and
$F_{ij}$ uniquely assuming that $F$ has been normalized to be symmetric by the
appropriate choice of the section $s\colon M\to\G$. Indeed, according
to~\eqref{tensorS} and~\eqref{tensorT}, the condition $\tr S + \tr T = 0$ reduces
to a linear equation on the symmetric part of $F_{ij}$:
\[
\tr (\delta^i_{[k}F_{l]j}+F_{j[l}\delta^i_{k]}) = (1-l)(F_{jk}+F_{kj})=\dots
\]
where the right-hand side depends only on the known terms. This equation
determines the symmetric part of $F$ uniquely. Similarly, the condition $\tr R
- S - T = 0$ determines $E^i_{j[kl]}$ in a unique way.

Let us also comment on the geometric contents of the computed coefficients
$E$ and $F$. While the $E^i_{j[kl]}$ have completed  the
definition of the Weyl connection (describing the differentiation in the
$\g_{-2}$--directions as given by the splitting fixed by the coefficients
$C$, whereas $A^i_{jk}$ already fixed the differentials in the
$\g_{-1}$--directions), the symmetric coefficients $F_{ij}$ represent the
homogeneity two part of the Rho tensor.
\end{rem}

\section{Normality obstructions}
\label{sec:4}

Let us compute when our inclusion maps normal $B_l$-geometry to a normal
$D_{l+1}$-geometry. The map $\alpha\colon\g\to\tilde \g$ induces an
isomorphism:
\[
\bar\alpha \colon\g/\pp\to\tilde \g/\tilde\pp.
\]
On the other hand, we can identify $\g/\pp$ and $\tilde \g/\tilde\pp$ with
$\pp_+$ and $\tilde\pp_+$ respectively using Killing forms on $\g$ and
$\tilde\g$ respectively. This immediately implies that the induced isomorphism
(of vector spaces) $\phi\colon\pp_+\to \tilde\pp_+$ has the form:
\[
\phi\colon \pp_+\to \tilde\pp_+,\quad
\begin{pmatrix} 0 & 0 & 0 \\ - Z^t & 0 & 0 \\ T & Z & 0 \end{pmatrix}\mapsto
\begin{pmatrix} 0 & 0 & 0 & 0 \\ 0 & 0 & 0 & 0 \\
-\sqrt{2}Z^t & 0 & 0 & 0 \\ T & \sqrt{2}Z & 0 & 0
\end{pmatrix}.
\]

We can fix a similar basis $\tE^{[ij]}$, $\tE^i_j$, $\tE_{[ij]}$ in $\tilde\g$,
where the indices run now from $0$ to $l$, in such a way that
\begin{align*}
\phi(E^{[kl]}) &= \tE^{[kl]},\\
\phi(E^l) &= \sqrt{2} \tE^{[0l]},\\
\alpha(E_{[ij]} &= \tE_{[ij]},\\
\alpha(E_i) &= \frac{1}{\sqrt{2}}( \tE^0_i + \tE_{[0i]}),\\
\alpha(E^i_j) &= \tE^i_j.
\end{align*}

The problem of mapping normal $B_l$-connections to normal $D_{l+1}$-connections
can be reformulated as follows. Let $\kappa\in\wedge^2\pp_+\otimes\g$ be the
structure function of the normal $B_l$-connection., i.e., it satisfies the
equation $\partial^*\kappa=0$. The mapping $\phi$ can be naturally extended to
the morphism $\wedge^k\pp_+\otimes\g \to
\wedge^k\tilde\pp_+\otimes\tilde\g$, which is defined as the natural extension of
$\phi$ on $\wedge^k\pp_+$ and as the embedding $\alpha$ on $\g$. 

\emph{The normality question can be stated as follows: Under which
conditions is the element $\phi(\kappa)\in \wedge^2\tilde\pp_+\otimes
\tilde\g$ co-closed too?}

Let us introduce the operator $[\partial^*, \phi] = \partial^*\phi -
\phi\partial^*$. It acts from $\wedge^2\pp_+\otimes\g$ to
$\wedge^2\tilde\pp_+\otimes \tilde\g$. As $\kappa$ itself is coclosed, it is
clear that $\phi(\kappa)$ is coclosed if and only if $[\partial^*,
\phi](\kappa)=0$.

Define the following elements in $\tilde\g$:
\begin{align*}
\Delta\tE^i &= \tE^{[0i]} - \tE_0^i,\\
\Delta\tE_i &= \tE_{[0i]} - \tE^0_i.
\end{align*}

We shall also need the following commutation relations that are easy to compute:
\begin{align*}
[\Delta\tE^i, \alpha(E^r_s)] &= -\delta^i_s\Delta\tE^s;\\
[\Delta\tE^i, \alpha(E_s)] &= -\delta^i_s\Delta\tE^0_0;\\
[\Delta\tE^i, \alpha(E_{[rs]})] &= \delta^i_{[r}\Delta\tE_{s]};\\
[\Delta\tE^i, \alpha(E^r)] &= 0;\\
[\Delta\tE^i, \alpha(E^{[rs]})] &= 0.
\end{align*}

Let us compute the operator $[\partial^*, \phi]$ explicitly. For example, for
any $\kappa = E^{[ij]}\wedge E^{[kl]}\otimes X$, $X\in\g$ we have:
\begin{align*}
\phi(\kappa) &= \tE^{[ij]}\wedge \tE^{[kl]}\otimes \alpha(X);\\
\partial^*\phi(\kappa) &= \tE^{[ij]}\otimes [\tE^{[kl]}, \alpha(X)] - \tE^{[kl]}\otimes [\tE^{[ij]},
\alpha(X)];\\
\partial^*(\kappa) &= E^{[ij]}\otimes [E^{[kl]}, X] - E^{[kl]}\otimes
[E^{[ij]}, X];\\
\phi\partial^*(\kappa)&= \tE^{[ij]}\otimes \alpha([E^{[kl]},X]) -
\tE^{[kl]}\otimes \alpha([E^{[ij]},X]) = \\
&= \tE^{[ij]}\otimes [\tE^{[kl]}, \alpha(X)] - \tE^{[kl]}\otimes [\tE^{[ij]},
\alpha(X)].
\end{align*}
Here we use the fact that $\alpha$ is a homomorphism of Lie algebras. We get:
\begin{equation}\label{g22}
[\partial^*, \phi]\colon E^{[ij]}\wedge E^{[kl]}\otimes X\mapsto 0.
\end{equation}

Similarly, for $\kappa = E^i\wedge E^{[jk]}\otimes X$, $X\in\g$ we have:
\begin{align*}
\phi(\kappa) &= \sqrt{2}\tE^{[0i]}\wedge\tE^{[jk]}\otimes\alpha(X);\\
\partial^*\phi(\kappa) &= \sqrt{2}
\tE^{[0i]}\otimes[\tE^{[jk]},\alpha(X)]-\sqrt{2}
\tE^{[jk]}\otimes[\tE^{[0i]},\alpha(X)];\\
\partial^*(\kappa) &= E^i\otimes [E^{[jk]}, X] - E^{[jk]}\otimes
[E^i,\alpha(X)];\\
\phi\partial^*(\kappa) &= \sqrt{2}\tE^{[0i]}\otimes[\alpha(E^{[jk]}),
\alpha(X)] - \tE^{[jk]}\otimes [\alpha(E^i), \alpha(X)]\\
&= \sqrt{2} \tE^{[0i]}\otimes
[\tE^{[jk]},\alpha(X)]-\frac{1}{\sqrt{2}}\tE^{[jk]}\otimes [\tE^i_0,
\alpha(X)]\\
& -\frac{1}{\sqrt{2}}\tE^{[jk]}\otimes [\tE^{[0i]},\alpha(X)].
\end{align*}
Thus, we get:
\begin{equation}\label{g21}
[\partial^*,\phi]\colon E^i\wedge E^{[jk]}\otimes X \mapsto -\frac{1}{\sqrt{2}}
\tE^{[jk]}\otimes [\Delta\tE^i,\alpha(X)].
\end{equation}

Finally, for $\kappa = E^i\wedge E^j\otimes X$, $X\in \g$ we have:
\begin{align*}
\phi(\kappa) &= 2 \tE^{[0i]}\wedge \tE^{[0j]}\otimes\alpha(X);\\
\partial^*\phi(\kappa)&= 2\tE^{[0i]}\otimes [\tE^{[0j]},\alpha(X)] - 2\tE^{[0j]}\otimes
[\tE^{[0i]},\alpha(X)];\\
\partial^*\kappa &= E^i\otimes [E^j, X] - E^j\otimes [E^i, X] - E^{[ij]}\otimes
X;\\
\phi\partial^*(\kappa)&= \tE^{[0i]}\otimes [\tE^{[0j]}+\tE^j_0,\alpha(X)]\\
& -\tE^{[0j]}\otimes [\tE^{[0i]}+\tE^i_0,\alpha(X)] -
\tE^{[ij]}\otimes\alpha(X).
\end{align*}
Thus, we get:
\begin{multline}\label{g11}
[\partial^*,\phi]\colon E^i\wedge E^j\otimes X \mapsto \\
\tE^{[ij]}\otimes\alpha(X) + \tE^{[0i]}\otimes [\Delta\tE^j,
\alpha(X)]-\tE^{[0j]}\otimes [\Delta\tE^i, \alpha(X)].
\end{multline}

\begin{lem}\label{ker}
The kernel of the mapping
\[
[\partial^*,\phi]\colon \wedge^2\pp_+\otimes\g \to \wedge^2\tilde\pp_+\otimes
\tilde\g
\]
contains the following spaces:
\begin{enumerate}
\item[(a)] $\wedge^2 \g_2\otimes \g$;
\item[(b)] $E^i\wedge E^{[jk]} \otimes \h_i$, $i=1,\dots, l$, where $\h_i$ is a subalgebra in $\g$
is given by $\h_i = \{ X\in \g \mid [\Delta \tE^i, \alpha(X)] = 0\}$.
\end{enumerate}
In particular, $\cap_{i=1}^l \h_i = \g_1 + \g_2$, and the kernel of
$[\partial^*,\phi]$ contains $\g_1\otimes\g_2\otimes(\g_1 + \g_2)$.
Moreover, the intersection of the kernel with $\wedge^2\g_1\otimes \g$ is
trivial.
\end{lem}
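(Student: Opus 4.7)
The plan is to handle the three assertions in turn, leveraging the explicit formulas \eqref{g22}, \eqref{g21}, \eqref{g11} computed just above the lemma.

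First, assertion (a) is immediate from \eqref{g22}: the operator $[\partial^*,\phi]$ simply vanishes on $E^{[ij]}\wedge E^{[kl]}\otimes X$, because the element $E^{[kl]}$ already lies in $\g_2$ and $\phi$ matches $\alpha$ on the corresponding brackets. Assertion (b) follows directly from \eqref{g21}: the right-hand side $-\tfrac{1}{\sqrt{2}}\tE^{[jk]}\otimes[\Delta\tE^i,\alpha(X)]$ vanishes precisely when $[\Delta\tE^i,\alpha(X)]=0$, i.e.\ when $X\in\h_i$.

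For the identification $\bigcap_{i=1}^l\h_i=\g_1+\g_2$, the inclusion $\supseteq$ is read off the last two commutation relations $[\Delta\tE^i,\alpha(E^r)]=[\Delta\tE^i,\alpha(E^{[rs]})]=0$. For the reverse inclusion, I would decompose any $X\in\bigcap_i\h_i$ according to the grading as $X=X_{-2}+X_{-1}+X_0+X_1+X_2$, note that $X_1,X_2$ contribute nothing to $[\Delta\tE^i,\alpha(X)]$, and then argue that the remaining three pieces produce images in pairwise distinct graded components of $\tilde\g$ under $\mathrm{ad}(\Delta\tE^i)$, so that they cannot cancel. Specifically, the first three commutation relations show that $\mathrm{ad}(\Delta\tE^i)\alpha(X_0)$, $\mathrm{ad}(\Delta\tE^i)\alpha(X_{-1})$, and $\mathrm{ad}(\Delta\tE^i)\alpha(X_{-2})$ land in images of $\Delta\tE^{\bullet}$, $\Delta\tE^0_0$, and $\Delta\tE_{\bullet}$ respectively, sitting in different graded summands of $\tilde\g$. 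Vanishing for every $i=1,\dots,l$ then forces $X_0=X_{-1}=X_{-2}=0$, via the non-degeneracy of the index contractions. The containment $\g_1\otimes\g_2\otimes(\g_1+\g_2)\subseteq\ker[\partial^*,\phi]$ then follows from (b) applied to elements of $\h_i$ for every $i$.

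The only really delicate step is the final claim about the triviality on $\wedge^2\g_1\otimes\g$. Here I would write an arbitrary element as $\kappa=\sum_{i<j}E^i\wedge E^j\otimes X_{ij}$ and apply \eqref{g11} to get
\[
[\partial^*,\phi]\kappa=\sum_{i<j}\Bigl(\tE^{[ij]}\otimes\alpha(X_{ij})+\tE^{[0i]}\otimes[\Delta\tE^j,\alpha(X_{ij})]-\tE^{[0j]}\otimes[\Delta\tE^i,\alpha(X_{ij})]\Bigr).
\]
The crucial observation is that the basis element $\tE^{[ij]}$ carries no index $0$, whereas all the correction terms involve $\tE^{[0i]}$ or $\tE^{[0j]}$, so the two groups live in linearly independent subspaces of $\wedge^2\tilde\pp_+$. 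Projecting onto the $0$-free component forces $\sum_{i<j}\tE^{[ij]}\otimes\alpha(X_{ij})=0$, and since $\alpha$ is injective this gives $X_{ij}=0$ for all $i<j$. I expect this linear independence argument to be the main technical point, since one needs to make sure no subtle cancellation is possible among the different $\tE^{[0k]}$ terms either — but the index-$0$ versus index-free dichotomy trivialises the obstruction cleanly.
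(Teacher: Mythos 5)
Your proof is correct for the lemma as stated, but the decisive last step is argued along a genuinely different line than the paper's. For parts (a), (b) and the computation of $\cap_i\h_i$ you follow what is essentially the intended reading of \eqref{g22}--\eqref{g21} and the listed commutation relations. For the triviality of the kernel on $\wedge^2\g_1\otimes\g$, the paper does not use your index-$0$ projection; it introduces a second order automorphism $s$ of $\tilde\g$ stabilizing $\alpha(\g)$, observes that $\alpha(\g)=\tilde\g^{1}(s)$ while every $\Delta\tE^i$ (hence every bracket $[\Delta\tE^i,\alpha(X)]$) lies in $\tilde\g^{-1}(s)$, and concludes that the ``$\alpha(X)$'' terms and the ``$[\Delta\tE^i,\alpha(X)]$'' terms in \eqref{g21} and \eqref{g11} can never cancel. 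Your argument -- projecting onto the span of the $\tE^{[ij]}$ with $i,j\ge 1$ in the first tensor slot -- is more elementary and does prove exactly the stated claim. What it does \emph{not} buy you is the stronger fact the paper's proof actually establishes and which Theorem~\ref{t:kappa11} later invokes: that the images of $\wedge^2\g_1\otimes\g$ and of $\g_1\otimes\g_2\otimes\g$ under $[\partial^*,\phi]$ intersect only in $0$. Indeed, by \eqref{g21} the image of $\g_1\otimes\g_2\otimes\g$ also sits in the $0$-free component $\tE^{[jk]}\otimes\tilde\g$, so your dichotomy cannot separate it from the $\tE^{[ij]}\otimes\alpha(X_{ij})$ terms; only the $s$-eigenspace argument (second factor in $\tilde\g^{1}(s)$ versus $\tilde\g^{-1}(s)$) does that. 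So your proof is adequate for the lemma but would need to be supplemented by the involution argument to support the deduction ``$[\partial^*,\phi](\kappa)=0\Rightarrow\kappa_{1,1}=0$'' in Theorem~\ref{t:kappa11}.

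Two small inaccuracies: the image of $[\partial^*,\phi]$ on $2$-cochains lies in $\tilde\pp_+\otimes\tilde\g$, not in $\wedge^2\tilde\pp_+\otimes\tilde\g$, so the linear independence you need is of the basis vectors $\tE^{[ij]}$ versus $\tE^{[0k]}$ in a single copy of $\tilde\pp_+$ (the argument is unaffected). And in the computation of $\cap_i\h_i$, the elements $\Delta\tE^r=\tE^{[0r]}-\tE^r_0$ and $\Delta\tE_s=\tE_{[0s]}-\tE^0_s$ do \emph{not} lie in single graded summands of $\tilde\g$ -- each straddles two of them -- so ``different graded components'' is not the right justification; what you actually need, and what holds by inspection of the explicit basis elements, is that the family $\{\Delta\tE^r,\Delta\tE^0_0,\Delta\tE_s\}$ is linearly independent in $\tilde\g$.
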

\begin{proof}
Item~(a) follows immediately from~\eqref{g22}. To prove the 
rest we need to show
that the images of $\g_1\otimes\g_2\otimes\g$ and $\wedge^2\g_1\otimes\g$
under the map $[\partial^*,\phi]$ do not intersect.

Let $s\colon\tilde\g\to \tilde\g$ be a non-trivial second order automorphism
that stabilizes $\alpha(\g)$. Then it is easy to see that $\alpha(\g)$
coincides with the $+1$ eigenspace $\tilde\g^{1}(s)$ of $s$ in $\tilde \g$,
while the elements $\Delta\tE^i$ lie in $\tilde\g^{-1}(s)$. Hence, for any
element $X\in\g$ the bracket $[\Delta\tE^i,\alpha(X)]$ lies in
$\tilde\g^{-1}(s)$ and is either 0 or is linearly independent of $\alpha(Y)$
of any non-zero $Y\in\g$. According to equations~\eqref{g21}
and~\eqref{g11}, we see that any non-zero elements in the images of
$\g_1\otimes\g_2\otimes\g$ and $\wedge^2\g_1\otimes\g$ under the map
$[\partial^*,\phi]$ are indeed linearly independent.

In particular, the intersection of the kernel of $[\partial^*,\phi]$ with
$\wedge^2\g_1\otimes\g$ is trivial. And the intersection of this kernel with
$\g_1\otimes\g_2\otimes\g$ is described by item~(b).
\end{proof}

Let us decompose $\kappa$ as $\kappa_{1,1}+\kappa_{1,2}+\kappa_{2,2}$, where
$\kappa_{i,j}\in \g_i\wedge\g_j\otimes\g$. Note that this decomposition is
$G_0$-invariant, but not, in general, $P$-invariant.

Up to now we have not used the fact that $\kappa$ is coclosed and is
concentrated in the positive degree of the space $\Hom(\wedge^2(\g_{-}),\g)$.
Using these additional facts, we arrive at the following result.

\begin{thm}\label{t:kappa11}
The extension of $B_l$-geometry to $D_{l+1}$-geometry is normal if and only if
$\kappa_{1,1}$ vanishes identically.
\end{thm}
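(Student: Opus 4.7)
The induced Cartan connection on the $D_{l+1}$-side is normal exactly when $\partial^*\phi(\kappa)=0$, and since $\kappa$ itself is coclosed we have $\phi\partial^*\kappa=0$, so the condition is equivalent to $[\partial^*,\phi](\kappa)=0$. Splitting $\kappa=\kappa_{1,1}+\kappa_{1,2}+\kappa_{2,2}$, part~(a) of Lemma~\ref{ker} kills the $\kappa_{2,2}$ summand, reducing the question to $[\partial^*,\phi](\kappa_{1,1}+\kappa_{1,2})=0$.

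The first step is the observation that the images of $[\partial^*,\phi]$ on $\wedge^2\g_1\otimes\g$ and on $\g_1\otimes\g_2\otimes\g$ are linearly disjoint inside $\wedge^2\tilde\pp_+\otimes\tilde\g$: by \eqref{g11} the former carries a non-zero summand $\tE^{[ij]}\otimes\alpha(X)\in\tE^{[ij]}\otimes\tilde\g^{1}(s)$ (with $i,j\ne 0$), while \eqref{g21} places the latter in $\tE^{[jk]}\otimes\tilde\g^{-1}(s)$; this is precisely the eigenspace argument from the proof of Lemma~\ref{ker}. Hence the conditions $[\partial^*,\phi](\kappa_{1,1})=0$ and $[\partial^*,\phi](\kappa_{1,2})=0$ decouple.

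The forward implication then follows immediately from Lemma~\ref{ker}: because the kernel of $[\partial^*,\phi]$ meets $\wedge^2\g_1\otimes\g$ only in zero, normality of the extension forces $\kappa_{1,1}=0$.

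For the converse, assume $\kappa_{1,1}=0$; one must verify that $[\partial^*,\phi](\kappa_{1,2})=0$ holds automatically. By \eqref{g21} this amounts to $\sum_i[\Delta\tE^i,\alpha(X_{i,[jk]})]=0$ for every $[jk]$, where $X_{i,[jk]}$ denote the coefficients of $\kappa_{1,2}$. The parts of $X_{i,[jk]}$ valued in $\g_1+\g_2$ lie in every $\h_i$ by part~(b) of Lemma~\ref{ker}, so the main obstacle concerns the components of $\kappa_{1,2}$ with values in $\g_{-2}+\g_{-1}+\g_0$. These are controlled by unpacking the coclosedness $\partial^*\kappa=0$ along the $\g_1\otimes\g$- and $\g_2\otimes\g$-projections of $\pp_+\otimes\g$: with $\kappa_{1,1}=0$, the degree-two piece reproduces the normality relations from the previous section, specialised to $T=0$, namely $\tr R=S$ and $\tr S=0$, while the higher-homogeneity pieces yield analogous trace identities. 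Combining these with the explicit commutators $[\Delta\tE^i,\alpha(\cdot)]$ recorded before Lemma~\ref{ker} produces the cancellations $\sum_i[\Delta\tE^i,\alpha(X_{i,[jk]})]=0$ required for each index pair $[jk]$, and the theorem follows.
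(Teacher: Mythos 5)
Your overall strategy coincides with the paper's: reduce normality to $[\partial^*,\phi](\kappa)=0$, use part (a) of Lemma~\ref{ker} to discard $\kappa_{2,2}$, use the eigenspace argument to see that the images of $\wedge^2\g_1\otimes\g$ and $\g_1\otimes\g_2\otimes\g$ are disjoint (whence the forward implication), and then show that coclosedness forces $[\partial^*,\phi](\kappa_{1,2})=0$ when $\kappa_{1,1}=0$. The forward direction and the treatment of the components of $\kappa_{1,2}$ valued in $\g_1+\g_2$ are fine.

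However, the converse has a genuine gap at the final step. By \eqref{g21}, what must vanish is the trace that contracts the $\g_1$-slot of $\kappa_{1,2}$ against the \emph{free lower index of the value}, e.g.\ $\sum_i S^i_{i[jk]}$ and $\sum_i Z^i_{i[jk]r}$. The identities you extract from $\partial^*\kappa=0$ (the $\g_1\otimes\g$-projection, equivalently $\tr S+\tr T=0$ with $T=0$) only give traces in a \emph{different} slot, namely $\sum_j S^j_{i[jk]}=0$, i.e.\ the upper index contracted against an index inside the bracket. These two traces are not related by coclosedness alone, and "combining with the explicit commutators" does not bridge them. The missing ingredient is the Bianchi identity: with $\kappa_{1,1}=0$, $\partial\kappa$ restricted to $\wedge^3\g_{-1}$ vanishes, which (via the isomorphism $\wedge^2\g_{-1}\cong\g_{-2}$) forces the complete antisymmetrization of $\kappa_{1,2}$ over its three $\g_1$-type indices to vanish, e.g.\ $S^l_{[ijk]}=0$. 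Only the combination of this antisymmetry with the trace condition $\sum_j S^j_{i[jk]}=0$ yields $\sum_i S^i_{i[jk]}=0$ (and likewise for the homogeneity-$1$ and homogeneity-$3$ pieces $P$ and $Z$). Without invoking the Bianchi identity, or some equivalent structural input beyond $\partial^*\kappa=0$, the asserted cancellations do not follow and the converse is not proved.
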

\begin{proof}
According to Lemma~\ref{ker} the condition $[\partial^*,\phi](\kappa)=0$
implies that $\kappa_{1,1}=0$. Let us prove the converse.

The assertion (a) of 
Lemma~\ref{ker} implies $[\partial^*,\phi](\kappa_{2,2})=0$, 
and it remains to prove that $[\partial^*,\phi](\kappa_{1,2})=0$.

Let us decompose $\kappa=\sum_{i>0}\kappa^i$ according to the homogeneity. In
particular, we have the decomposition $\kappa_{1,2}=\sum_{i>0}\kappa^i_{1,2}$.

The Bianchi identity can be written as:
\[
\partial\kappa(X,Y,Z) = \left\{ \kappa(\kappa(X,Y),Z) \right\}
- \left\{ \big(L_{Z^*}\big)\kappa(X,Y) \right\},
\]
where the bracket $\{,\}$ denotes the complete anti-symmetrization by $X,Y,Z$.
Applying this formula to the case $X,Y,Z\in\g_{-1}$, we immediately see that
the right-hand side vanishes identically due to the assumption
$\kappa_{1,1}=0$. Thus, $\partial\kappa$ vanishes identically on
$\wedge^3\g_{-1}$. In more detail, for $X,Y,Z\in\g_{-1}$ we have:
\begin{equation}\label{dk12}
\partial\kappa(X,Y,Z) = \{ \kappa([X,Y],Z) \} - \{ [X,\kappa(Y,Z)] \}
= \{ \kappa_{1,2}([X,Y],Z) \} = 0.
\end{equation}
Note that the map $X\wedge Y\mapsto [X,Y]$ establishes an isomorphism of
$\wedge^2\g_{-1}$ and $\g_{-2}$. Then equation~\eqref{dk12} means that the
complete anti-symmetrization of $\kappa_{1,2}$ interpreted as an element of
$\g_1\otimes\wedge^2\g_1\otimes\g$ is identically $0$.

Next, consider the normality condition $\partial^*\kappa=0$ in more detail.
Denote by $\pr_1$ the projection of $\pp_{+}\otimes\g$ to $\g_1\otimes\g$
along $\g_2\otimes\g$. Note that $\pr_1\partial^*(\kappa_{2,2})=0$. 
Since $\kappa_{1,1}=0$ by
assumption, the equation $\partial^*\kappa=0$ implies that
$\pr_1\partial^*(\kappa_{1,2})=0$. In more detail, $\pr_1\partial^*$ restricted
to $\g_{1}\otimes\g_{2}\otimes\g$ and applied to $\kappa$ can be written as:
\begin{equation}\label{prk12}
\pr_1\partial^*\colon \g_{1}\otimes\g_{2}\otimes\g\to \g_{1}\otimes\g,
\quad X\otimes Y\otimes Z \mapsto X\otimes[Y,Z],\end{equation}
where $
X\in\g_1,Y\in\g_2,Z\in\g$.

Now, let us prove $[\partial^*,\phi](\kappa^i_{1,2})=0$ for each $i$. As
$\kappa^i_{1,2}$ takes values in $\g_{i-3}$, according to Lemma~\ref{ker} this
equation is satisfied automatically for $i>3$. It remains to consider the cases
$i=1,2,3$.

For $i=1$ we have $\kappa^1_{1,2}\in \g_1\otimes\g_2\otimes \g_{-2}$, and it
is the only harmonic part of the curvature. As the space of harmonic curvatures
coincides with the traceless part of $\g_1\otimes\g_2\otimes \g_{-2}$, we
derive that all traces of $\kappa^1_{1,2}$ should vanish.

We can explicitly write $\kappa^1_{1,2}$ as $P^{[rs]}_{i[jk]}E^i\otimes
E^{[jk]}\otimes E_{[rs]}$. Then according to~\eqref{g21} we have:
\[
[\partial^*,\phi](\kappa^1_{1,2}) = -\frac{1}{\sqrt{2}} P^{[rs]}_{i[jk]}
\tE^{[jk]}\otimes [\Delta\tE^i,\alpha(E_{[rs]})] = \sqrt{2} \sum_i
P^{[is]}_{i[jk]} \tE^{[jk]}\otimes \tE_s.
\]
As the tensor $P$ is totally traceless (this can also be derived directly
from~\eqref{dk12} and~\eqref{prk12}), we see that $\sum_i P^{[is]}_{i[jk]}=0$
and, thus, $[\partial^*,\phi](\kappa^1_{1,2})=0$.

Next, for $i=2$ we have $\kappa^2_{1,2} = S^l_{i[jk]}E^i\otimes E^{[jk]}\otimes
E_l$. Then according to~\eqref{g21}:
\[
[\partial^*,\phi](\kappa^2_{1,2}) = -\frac{1}{\sqrt{2}} S^l_{i[jk]}
\tE^{[jk]}\otimes [\Delta\tE^i,\alpha(E_l)] = \frac{1}{\sqrt{2}} \sum_i
S^i_{i[jk]} \tE^{[jk]}\otimes \tE^0_0.
\]
But equation~\eqref{dk12} implies that $S^l_{[ijk]}=0$, and the condition
$\pr_1\partial^*(\kappa^2_{1,2})=0$ is rewritten as:
\[
S^l_{i[jk]}E^i\otimes [E^{[jk]}, E_l] = - \sum_j S^j_{i[jk]} E^i\otimes E^k =
0.
\]
Hence, $\sum_j S^j_{i[jk]}=0$ and together with $S^l_{[ijk]}=0$ it implies that
$\sum_i S^i_{i[jk]}=0$. Thus, $[\partial^*,\phi](\kappa^2_{1,2})=0$ as well.

Similarly, for $i=3$ we have $\kappa^3_{1,2} = Z^s_{i[jk]r}E^i\otimes
E^{[jk]}\otimes E^r_s$. Then according to~\eqref{g21}
\[
[\partial^*,\phi](\kappa^3_{1,2}) = -\frac{1}{\sqrt{2}} Z^s_{i[jk]r}
\tE^{[jk]}\otimes [\Delta\tE^i,\alpha(E^r_s)] = - \frac{1}{\sqrt{2}} \sum_i
Z^i_{i[jk]r} \tE^{[jk]}\otimes \tE^r.
\]
As above, equation~\eqref{dk12} implies that $Z^s_{[ijk]r}=0$, and the
condition $\pr_1\partial^*(\kappa^3_{1,2})=0$ is rewritten as:
\[
Z^s_{i[jk]r}E^i\otimes [E^{[jk]}, E^r_s] = - \sum_j Z^j_{i[jk]r} E^i\otimes
E^{[rk]} = 0.
\]
Hence, $\sum_j Z^j_{i[jk]r}=0$ and together with $Z^s_{[ijk]r}=0$ it implies
that $\sum_i Z^i_{i[jk]r}=0$. Thus, $[\partial^*,\phi](\kappa^2_{1,2})=0$.

This completes the proof of the theorem.
\end{proof}

Geometrically, the condition $\kappa_{1,1}=0$ from Theorem~\ref{t:kappa11}
means that the curvature tensor of the normal Cartan connection associated with
the non-degenerate distribution $D$ vanishes identically on $\wedge^2 D$. It is
clear that this condition is well-defined and defines a certain subclass in the
class of all non-degenerate $l$-dimensional distributions on
$l(l+1)/2$-dimensional manifolds. The example from S.~Armstrong~\cite{armst}
shows that this subclass is not empty.

\begin{ex}[\cite{armst}]\label{ex-a}
Let $l$ be any integer greater or equal to $4$. Let $\{x_i$, $y_{[jk]}\}$ be a
local coordinate system on $M$, where $1\le i,j,k\le l$, $j<k$. We also define
functions $y_{[kj]}$ as $-y_{[jk]}$ for $k>j$. Define a frame:
\[
Y_{[jk]} = \dd{y_{[jk]}},\quad X_i = \dd{x_i}-\sum_{p=i+1}^l x_p Y_{[ip]}.
\]
$1\le j<k\le l$, $1\le 1\le l$. As before, we define also $Y_{[kj]}=-Y_{[jk]}$
for $k>j$ and $Y_{[jj]}=0$ for any $1\le j\le l$.

Clearly, we have $[X_i,X_j]=Y_{[ij]}$ and $[X_i,Y_{[jk]}]=0$ for all $1\le
i,j,k\le l$ The distribution $D$ spanned by $X_1,\dots,X_l$ defines a flat
normal Cartan connection of type $B_l$ and has a maximal possible symmetry
algebra of dimension $l(l+1)/2$ among all non-degenerate distributions of
rank~$l$.

Now let $X_1'=X_1+y_{[12]}Y_{[34]}$ and $X_i'=X_i$ for $i\ge 2$. Define $D'$ as
a span of $X_1',X_2',\dots,X_l'$. Is is easy to see that we still have the
commutation relation $[X_i',X_j']=-Y_{[ij]}$ for all $1\le i,j\le l$. But now
we also get an additional non-trivial relation $[X_1',Y_{[12]}]=-Y_{[34]}$.

Thus, using structure equations~\eqref{streq}, we see that the only
non-vanishing structure coefficient is $f_{1[12]}^{[34]}=1$. Note that the
tensor $P$ defined by~\eqref{tensorP} with $A=0$ is already trace-free. So, we
see that functions $A^i_{jk}$ and $C^i_{[jk]}$ vanish in this case, and the
only non-vanishing coefficient of the tensor $P$ is $P_{1[12]}^{[34]}=1$.
Proceeding to the coefficients of degree~2, in the same way we get $E=F=0$ and
all curvature parts of degree~2 (tensors $R$, $S$ and $T$) vanish identically.
Thus, we get $\om^i_j=0$ and $\om_i=0\mod D^{\perp}$.

In fact, assuming that $\om_i=0$ and $\om_{[ij]}=0$ (on $M$), we get a
$\g$-valued 1-form on $M$, whose curvature is concentrated in degree~1. We can
always extend this 1-form in a unique way to a well-defined Cartan connection
$\om$ on the direct product $\G=M\times P$.

In particular, the structure function $\kappa$ of the constructed normal
connection satisfies the condition $\kappa_{1,1}=0$, and the connection itself
extends to a normal almost spinorial Cartan geometry. On the other hand,
$\kappa\ne0$, and the distribution~$D'$ is not equivalent to the model
distribution~$D$.
\end{ex}

\thebibliography{99}
\bibitem{armst} S.~Armstrong, \emph{Free 3-distributions: holonomy, Fefferman constructions and dual
distributions}, \texttt{arXiv:0708.3027v3}.

\bibitem{bryant} R.~Bryant, \emph{Conformal geometry and 3-plane
fields on 6-manifolds}, Proceedings of the RIMS symposium
``Developments of Cartan geometry and related mathematical problems''
(24-27 October 2005).

\bibitem{cap-twistor} A.~\v Cap, \emph{Correspondence spaces and twistor
spaces for parabolic geometries}, J. Reine Angew. Math. \textbf{582} (2005),
143--172.

\bibitem{capsch} A.~\v Cap, H.~Schichl, \emph{Parabolic Geometries and
Canonical Cartan Connections}, Hokkaido Math.\ J., \textbf{29}, no.~3
(2000), 453--505.

\bibitem{capslovak}A.~\v Cap, J.~Slov\'ak, \emph{Parabolic
Geometries I: Background and General Theory}, 
Mathematical Surveys and Monographs, AMS Publishing House,
2008, cca 600pp, to appear.

\bibitem{cartan} \'E.~Cartan, \emph{Les syst\`emes de Pfaff a cinq
variables et les \'equations aux d\'eriv\'ees partielles du second
ordre}, Ann.\ Sc.\ Norm.\ Sup., \textbf{27} (1910), 109--192.

\bibitem{fox} D.~Fox, \emph{Contact projective structures}, Indiana Univ.
Math. J., \textbf{54} (2005), 1547--1598.

\bibitem{kost} B.~Kostant, \emph{Lie algebra cohomology and the generalized Borel--Weil theorem}, Ann.\ of Math.,
\textbf{74}(1961), 329--397.

\bibitem{nurowski} P.~Nurowski, \emph{Differential equations and
conformal structures}, J.\ Geom.\ Phys., \textbf{55} (2005), 19--49.

\bibitem{onish1} A.~Onishchik, \emph{On compact Lie groups transitive
on certain manifolds}, Sov.\ Math., Dokl. \textbf{1} (1961),
1288--1291; translation from Dokl.\ Akad.\ Nauk SSSR, \textbf{135}
(1961), 531--534.

\bibitem{onish2} A.~Onishchik, \emph{Topology of transitive
transformation groups}, Leipzig: Johann Ambrosius Barth, 1994.

\bibitem{silhan} J.~\v Silhan, \emph{Algorithmic computations of
Lie algebra cohomologies}, Proceedings of the
Winter School on Geometry and Physics, Srni 2002, Suppl.\ Rendiconti Circolo
Mat. Palermo, Serie II, 2003, 191-197.
\texttt{http://bart.math.muni.cz/\~{}silhan/lie/}

\bibitem{tanaka} N.~Tanaka, \emph{On the equivalence problem
associated with simple graded Lie algebras}, Hokkaido Math.\ J.,
\textbf{8} (1979), 23--84.

\bibitem{tits} J.~Tits, \emph{Espaces homog\'enes complexes compacts},
Comment.\ Math.\ Helv., \textbf{37} (1962), 111--120.

\bibitem{yamaguchi} K.~Yamaguchi, \emph{Differential systems
associated with simple graded Lie algebras}, Advanced Studies in Pure
Mathematics, \textbf{22} (1993), 413--494.
\end{document}